
\documentclass[letterpaper,10 pt, conference]{ieeeconf}

\usepackage{cite}
\usepackage{graphicx}
\usepackage{amssymb}
\usepackage[cmex 10]{amsmath}
\usepackage{array}
\usepackage{mdwmath}
\usepackage{mdwtab}
\usepackage{balance}
\usepackage{multicol}

\newtheorem{theorem}{Theorem}
\newtheorem{definition}{Definition}
\newtheorem{lemma}{Lemma}
\newtheorem{remark}{Remark}


\IEEEoverridecommandlockouts                              
\overrideIEEEmargins

\title{\LARGE \bf
A New Stability Result for  the Feedback Interconnection of Negative
Imaginary  Systems with a Pole at the Origin }

\author{Mohamed A. Mabrok, Abhijit G. Kallapur, Ian R. Petersen, and Alexander Lanzon
\thanks{This work was supported by the Australian Research Council}
\thanks{Mohamed Mabrok, Abhijit Kallapur and Ian Petersen are with the School of Engineering and Information Technology,
        University of New South Wales at the Australian Defence Force Academy
        Canberra ACT 2600, Australia
        {\tt\small abdallamath@gmail.com, abhijit.kallapur@gmail.com, i.r.petersen@gmail.com}}%
\thanks{Alexander Lanzon is with the Control Systems Centre, School of Electrical and Electronic Engineering,
        University of Manchester, Manchester M13 9PL, United Kingdom
        {\tt\small Alexander.Lanzon@manchester.ac.uk}}%
}

\begin{document}
\maketitle

\begin{abstract}
This paper is concerned with  stability conditions for the  positive
feedback interconnection of negative imaginary systems. A
generalization of the negative imaginary lemma is derived, which
remains true even if the transfer function has poles on the
imaginary axis including the origin. A  sufficient condition for the
internal stability of a feedback interconnection for NI systems
including   a pole at the origin is given and an illustrative
example is presented to support the result.
\end{abstract}


\section{Introduction}

Structural modes in machines and robots, ground and aerospace
vehicles, and precision instrumentation, such as atomic force
microscopes and optical systems, can limit the ability of control
systems to achieve the desired performance \cite{petersen2010}. This
problem is simplified to some extent by using force actuators
combined with collocated measurements of velocity, position, or
acceleration.

The use of  force actuators combined with  velocity  measurements
has been studied using the  positive real (PR) theory  for linear
time invariant (LTI) systems; e.g., see
\cite{anderson-bk1973,brogliato-bk2007}. PR systems, in the
single-input single-output (SISO) case, can be defined as systems
where the real part of the transfer function is nonnegative. Many
systems that dissipate energy fall under the category of PR systems.
For instance, they can arise in electric circuits with linear
passive components and magnetic couplings. In spite of its success,
a drawback of the PR theory is the requirement for the relative
degree of the underlying system transfer function to be either zero
or one \cite{brogliato-bk2007}. Hence, the control of flexible
structures with force actuators combined with position measurements,
cannot use the theory of  PR systems.

Lanzon and Petersen introduce a new class of systems in
\cite{lanzon2007,lanzon2008} called negative imaginary (NI) systems,
which has fewer restrictions on the relative degree of the system
transfer function than in the PR case. In the SISO case, such
systems are defined by considering the properties of the imaginary
part of the  transfer function $G(j\omega) = D + C (j\omega I -
A)^{-1} B,$ and requiring  the condition $j\left( G(j\omega
)-G(j\omega )^{\ast }\right) \geq 0$ for all $\omega\in(0,\infty)$.

In general, NI systems are stable systems having a phase lag between
0 and $-\pi$ for all $\omega
> 0$. That is, their Nyquist plot lies below  the real axis when the frequency
varies in the open interval  $(0,\infty)$ (for strictly
negative-imaginary systems, the Nyquist plot should not touch the
real axis except at zero frequency or at infinity). This is similar
to PR systems where the  Nyquist plot is constrained to lie in the
right half of the complex plane
\cite{anderson-bk1973,brogliato-bk2007}. However, in contrast to PR
systems,  transfer functions for NI systems can have relative degree
more than unity.

NI systems can be transformed into  PR systems and vice versa under
some technical assumptions. However, this equivalence is not
complete. For instance, such a transformation applied to  a strictly
negative imaginary (SNI) system  always leads to a non-strict PR
system. Hence, the passivity  theorem
\cite{anderson-bk1973,brogliato-bk2007} cannot capture the stability
of the closed-loop  interconnection of an NI and an SNI system. In
addition, any controller design approach based on strictly PR
synthesis cannot be used for the control of an NI system
irrespective of whether it is strict or
non-strict. 
Also, transformations of NI systems to bounded-real systems for
application of the small-gain theorem  suffers from the exact same
difficulty of giving a non-strict bounded real system despite the
original system being SNI; see \cite{song2010} for details.


Many practical systems  can be consider as  NI systems. For example,
when considering the transfer function from a force actuator to a
corresponding  collocated position sensor (for instance,
piezoelectric sensor) in a lightly damped structure
\cite{lanzon2007,lanzon2008,fanson1990,petersen2010,Bhikkaji2009,Yong2010}.
Also, stability results for interconnecting systems with an NI
frequency response have been applied to decentralized control of
large vehicle platoons in \cite{hagen2010}. Here, the authors
discuss the availability of various  designs to enhance the robust
stability of the system with respect to small variations in
neighbor-coupling gains.

 NI systems theory has been extended by Xiong et. al. in
\cite{xiong2009,xiong2009a,xiong2010}  by allowing for simple poles
on the imaginary axis of the complex plane except at the origin.
Furthermore, NI controller synthesis has also been discussed in
\cite{lanzon2007,lanzon2008}. In addition, it has been shown in
\cite{lanzon2007,lanzon2008} that a necessary and sufficient
condition for the internal stability of a positive-feedback
interconnection of an NI system with transfer function matrix $M(s)$
and  an SNI system with transfer function matrix $N(s)$ is given by
the DC gain condition $\lambda_{max}(M(0)N(0))<1$. Here, the
notation $\lambda_{max}(\cdot)$ denotes the maximum eigenvalue of a
matrix with only real eigenvalues.

A generalization of  the  NI lemma in \cite{xiong2009a,xiong2010} to
include a simple pole at the origin was presented in
\cite{mabrok2011}. In \cite{mabrok2011}, stability analysis for a
spacial class of generalized NI systems with the inclusion of an
integrator connected in parallel  with an NI system   was discussed.
The assumption in \cite{mabrok2011} restricts the application of the
proposed stability result to NI systems which can be decomposed into
the parallel connection of
 an NI system and an
integrator.

In this paper, we extend the results in
\cite{lanzon2007,lanzon2008,xiong2009,xiong2009a,xiong2010,petersen2010,mabrok2011}
  for NI systems to allow for the existence of a pole at the origin with a more general structure than allowed in the result of  \cite{mabrok2011}. This extension allows us to stabilize
   any NI system with a pole at the origin
  without any
  parallel decomposition  assumption. Also, stabilizing NI systems with a pole at the origin can be used for
  controller design  with  integral action.


This paper is further organized as follows: Section
\ref{sec:preliminaries} introduces the concept of PR and NI systems
and presents a relationship between them. The main results of this
paper are presented in Section  \ref{sec:main-results}. Section
\ref{sec:numerical-example} provides a numerical example and the
paper is concluded with  a summary and remarks on future work in
Section \ref{sec:conclusion}.

\section{Preliminaries}
\label{sec:preliminaries}

In this section, we introduce the definitions  of PR and NI systems.
We also present a lemma describing the transformation between PR and
NI systems, and some technical results which will be used in
deriving the main results of the paper.

 The definition of PR systems has been motivated
by the study of linear electric circuits composed of resistors,
capacitors, and inductors. For a detailed discussion of PR systems,
see \cite{anderson-bk1973,brogliato-bk2007} and references therein.

\begin{definition}
A square transfer function matrix $F(s)$ is positive real if:
\begin{enumerate}
    \item $F(s)$ has no pole in $Re[s]>0$.
    \item $F(j \omega)+ F(j \omega)^{\ast }\geq 0$ for all positive real $j \omega$ such that $j \omega $ is not a pole of $F(j \omega)$.
   \item If $j \omega_{0} $, finite or infinite, is a pole of $F(j \omega)$, it is a simple pole and the
corresponding residual matrix $K_{0}=\underset{%
s\longrightarrow j\omega _{0}}{\lim }(s-j\omega _{0})F(s)$ is
positive semidefinite Hermitian.
\end{enumerate}
\end{definition}

To establish the main results of this paper,  we consider  a
generalized definition for NI systems which allows for a simple pole
at the origin as follows:
\begin{definition}\label{Def:NI}
A square transfer function matrix $G(s)$ is NI if the following
conditions are satisfied:
\begin{enumerate}
\item $G(s)$ has no pole in $Re[s]>0$.
\item For all $\omega \geq0$ such that $j\omega$ is not a pole of $G(s )$, $j\left( G(j\omega )-G(j\omega )^{\ast }\right) \geq 0$.
\item if $s=j\omega _{0}$ is a pole of $G(s)$ then it is a simple pole. Furthermore  if $\omega _{0}>0$, the residual matrix $K_{0}=\underset{%
s\longrightarrow j\omega _{0}}{\lim }(s-j\omega _{0})jG(s)$ is
positive semidefinite Hermitian.
\end{enumerate}
\end{definition}

\begin{definition}
A square transfer function matrix $G(s)$ is SNI if  the following
conditions are satisfied:
\begin{enumerate}
\item $G(s)$ has no pole in $Re[s]\geq0$.
\item For all $\omega >0$, $j\left( G(j\omega )-G(j\omega )^{\ast }\right) > 0$.
\end{enumerate}
\end{definition}

Due to  advances in the theory of  PR systems and the complementary
definitions of PR and NI systems, it is useful to establish a lemma
which considers  the relationship between these notions  to further
develop the theory of  NI systems. In order to do so,
 we consider the possibility of having a simple pole at the origin, and relax the condition $\det (A)\neq0$  considered  in
\cite{lanzon2008,xiong2009,xiong21010jor}. This leads to a
modification of  the relationship between PR and NI systems as
follows:

\begin{lemma}(see also \cite{mabrok2011}) \label{PR-NI-lemma}
Given a real rational  proper transfer function matrix $G(s)$ with
state space realization $
\begin{bmatrix}
\begin{array}{c|c}
A & B \\ \hline C & D
\end{array}
\end{bmatrix}
$  and the transfer function matrix $\tilde{G}(s)=G(s)-D $, the
transfer function matrix $G(s)$ is NI if and only if the transfer
function matrix $F(s)=s\tilde{G}(s)$ is PR. Here, we assume that any
pole zero cancellation which occurs in $s\tilde{G}(s)$ has been
carried out to obtain $F(s)$.
\end{lemma}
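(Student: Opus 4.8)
The plan is to verify directly that the three defining properties of a positive real transfer function hold for $F(s)=s\tilde{G}(s)$ if and only if the three defining properties of a negative imaginary transfer function hold for $G(s)$, using throughout the elementary identity $F(s)=s\tilde{G}(s)=sG(s)-sD$, where $\tilde{G}(s)=C(sI-A)^{-1}B$ is strictly proper so that $F$ is proper with $F(\infty)=CB$. As a preliminary I would record that the feedthrough term of a real-rational NI transfer function is necessarily symmetric, $D=D^{T}$: letting $\omega\to\infty$ in condition~2 of Definition~\ref{Def:NI} gives $j(D-D^{T})\geq 0$, and since that Hermitian matrix has spectrum symmetric about the origin it must vanish. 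This symmetry is exactly what makes the frequency-domain conditions of the two definitions coincide.

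First I would dispose of the pole-location condition. Multiplying $\tilde{G}(s)$ by the polynomial $s$ introduces no new poles (at most a zero at the origin, or the cancellation of a simple pole of $\tilde{G}$ there), and conversely $\tilde{G}(s)=F(s)/s$ alters only the behaviour at $s=0$; since $G$ and $\tilde{G}$ differ by a constant, $G$, $\tilde{G}$ and $F$ have the same poles in $\Re[s]>0$. Hence condition~1 in the definition of positive realness for $F$ is equivalent to condition~1 of Definition~\ref{Def:NI} for $G$. For the frequency-domain condition, a direct computation using $D=D^{T}$ gives, for every $\omega>0$ at which $j\omega$ is not a pole,
\[
F(j\omega)+F(j\omega)^{\ast}=\omega\, j\,(G(j\omega)-G(j\omega)^{\ast}),
\]
so, dividing by $\omega>0$, the PR inequality $F(j\omega)+F(j\omega)^{\ast}\geq 0$ is equivalent to the NI inequality $j(G(j\omega)-G(j\omega)^{\ast})\geq 0$; the frequency $\omega=0$, when it is not a pole, is trivial on the PR side because $F(0)=0$, and the matching NI inequality there follows by continuity from its validity for small $\omega>0$.

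The substantive step, and the one I expect to be the main obstacle, is the imaginary-axis residue condition, particularly at the origin. For a pole $j\omega_{0}$ with $\omega_{0}>0$: $G$ has a simple pole there if and only if $F=sG-sD$ does, since $s$ is analytic and nonzero at $j\omega_{0}$; and if $R_{0}=\lim_{s\to j\omega_{0}}(s-j\omega_{0})G(s)$ is the residue of $G$ then the residue of $F$ at $j\omega_{0}$ equals $j\omega_{0}R_{0}=\omega_{0}K_{0}$, where $K_{0}=jR_{0}$ is the matrix appearing in Definition~\ref{Def:NI}. Because $\omega_{0}>0$, ``$\omega_{0}K_{0}\geq 0$ Hermitian'' and ``$K_{0}\geq 0$ Hermitian'' are the same requirement, so the PR and NI residue conditions agree at every $j\omega_{0}>0$. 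At the origin, a simple pole of $G$ (hence of $\tilde{G}$) with residue $R_{0}$ is cancelled exactly by the factor $s$, so $F$ is analytic at $s=0$ with $F(0)=R_{0}$; the PR pole-residue condition at the origin is therefore vacuous, and the only remaining PR constraint there, $F(0)+F(0)^{\ast}=R_{0}+R_{0}^{T}\geq 0$, is precisely the condition already forced on the origin-residue of an NI system by letting $\omega\to 0^{+}$ in condition~2 of Definition~\ref{Def:NI}. Care is needed to work with the representation of $F$ in which the pole--zero cancellation at the origin has been carried out, as the statement assumes, and to note that $F$ inherits no pole at infinity since $\tilde{G}$ is strictly proper. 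Collecting the three equivalences gives the lemma in both directions.
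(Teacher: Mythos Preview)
Your approach is essentially the paper's: a condition-by-condition verification that the three PR properties of $F$ match the three NI properties of $G$, with the same treatment of imaginary-axis poles (residue of $F$ at $j\omega_0$ equals $\omega_0 K_0$) and of the origin (pole--zero cancellation makes $F(0)$ finite, then continuity gives $F(0)+F(0)^{\ast}\geq 0$). The only cosmetic differences are that you first establish $D=D^{T}$ and work directly with $G$ whereas the paper works with $\tilde{G}$ throughout, and that the paper explicitly extends the PR inequality to $\omega<0$ via conjugate symmetry of the real-rational $F$, a step you omit but which is immediate.
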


\begin{proof}
(Necessity) It is straightforward  to show that if $\tilde{G}(s)$ is
NI then $G(s)$ is NI and vice-versa. Suppose that $j \left(
\tilde{G}(j\omega )-\tilde{G}(j\omega )^{\ast }\right) \geq 0, $ for
all $\omega
>0$ such that $j\omega $  is not a pole of $G(s)$. Then
 given any such $\omega > 0$,
$ F(j\omega )+F(j\omega )^{\ast } =j\omega \left( \tilde{G}(j\omega
)-\tilde{G}(j\omega )^{\ast }\right) \geq 0, $ and $\overline{\left(
F(j\omega )+F(j\omega )^{\ast }\right) }\geq 0 $.  This means that $
F(-j\omega )+F(-j\omega )^{\ast } \geq 0$ for all  $\omega >0$ which
implies that $ F(j\omega )+F(j\omega )^{\ast } \geq 0$ for all
$\omega <0$ such that $j\omega$ is not a pole of $G(s)$. Hence,
$\left( F(j\omega )+F(j\omega )^{\ast }\right) $ $\geq 0$ for all
$\omega \in (-\infty ,\infty )$ such that $j\omega $ is not a pole
of $\tilde{G}(j\omega)$.

Now, consider the case where $j\omega_{0}$ is a pole of
$\tilde{G}(s)$ and $\omega _{0}=0.$ Since $\tilde{G}(s)$ has only a
simple pole at the origin, $F(s)=s\tilde{G}(s)$ will have no pole at
the origin because of the pole zero cancellation. This implies that
$F(0)$ is finite.
Since $ F(j\omega )+F(j\omega )^{\ast } \geq 0$ for all $%
\omega >0$ and $F(j\omega )$ is continuous, this implies that
$F(0)+F(0)^{\ast }\geq 0$. Also, if $j\omega _{0}$
is a pole of $\tilde{G}(s)$ and $\omega _{0}>0$, then $\tilde{G}(s)$ can be factored as $\frac{1}{s^{2}+\omega _{0}^{2}%
}R(s)$, which according to the definition for NI systems implies
that the residual matrix $ K_{0}=\frac{1}{2\omega _{0}}R(j\omega
_{0})$ is positive semidefinite Hermitian. This implies that
$R(j\omega _{0})=R(j\omega _{0})^{\ast }\geq 0$. Now, the residual
matrix of $F(s)$ at $j\omega _{0}$  with \ $\omega _{0}>0$ is given
by,
\begin{eqnarray*}
\underset{s\longrightarrow j\omega _{0}}{\lim }(s-j\omega _{0})F(s) &=&%
\underset{s\longrightarrow j\omega _{0}}{\lim }(s-j\omega _{0})s
\tilde{G}(s), \\
&=&\underset{s\longrightarrow j\omega _{0}}{\lim }(s-j\omega _{0})s\frac{1}{%
s^{2}+\omega _{0}^{2}}R(s),\\
 &=&\frac{1}{2}R(j\omega _{0})
\end{eqnarray*}%
which is positive semidefinite Hermitian. 
Hence, $F(s)$ is positive real.

 (Sufficiency) Suppose that $F(s)$ is positive real. Then, $F(j\omega )+F(j\omega )^{\ast } \geq 0$ for all $\omega \in
(-\infty ,\infty )$ such that $j\omega$ is not a pole of $F(s)$.
This implies $ j\omega \left( \tilde{G}(j\omega )-\tilde{G}(j\omega
)^{\ast }\right) \geq 0$ for all $\omega \geq0$ such that $j\omega$
is not a pole of $G(s)$. Then
  $\tilde{G}(j\omega )-\tilde{G}(j\omega )^{\ast } \geq 0$
for all such $\omega \in \lbrack 0,\infty )$.  In addition, if $j
\omega_0$ is a pole of $F(s)$, then it follows from the definition
of
PR systems that the residual matrix $\underset{s\longrightarrow j\omega _{0}}{%
\lim }(s-j\omega _{0})F(s)$ is positive semidefinite Hermitian.
Also,
\begin{eqnarray*}
\underset{s\longrightarrow j\omega _{0}}{\lim }(s-j\omega _{0})F(s) &=&%
\underset{s\longrightarrow j\omega _{0}}{\lim }(s-j\omega _{0})s
\tilde{G}(s),  \\
 &=&\omega _{0}\underset{s\longrightarrow j\omega _{0}}{\lim
}(s-j\omega _{0})j \tilde{G}(s).
\end{eqnarray*}%
Then using Definition \ref{Def:NI}, we can conclude that
$\tilde{G}(s)$ is NI.
\end{proof}

\begin{remark}  Note that a pole zero
cancellation at the origin in $F(s) = s \tilde G(s)$ will not affect
the use of the PR lemma when applied to $F(s)$ since the minimality
condition is relaxed in the generalized version of the PR lemma
\cite{lanson2011,scherer1994}.
\end{remark}
%

Now, we present  a generalized NI lemma, which allows for a  pole at
the origin.

 Consider the following LTI system,
\begin{align}
\label{eq:xdot}
&\dot{x}(t) = A x(t)+B u(t), \\
\label{eq:y} &y(t) = C x(t)+D u(t),
\end{align}%
where, $A \in \mathbb{R}^{n \times n},B \in \mathbb{R}^{n \times
m},C \in \mathbb{R}^{m \times n},$ and $D \in \mathbb{R}^{m \times
m}$.

 \begin{lemma}(see also \cite{mabrok2011})\label{NI-lemma}
 Let $
\begin{bmatrix}
\begin{array}{c|c}
A & B \\ \hline C & D
\end{array}
\end{bmatrix}$ be a minimal realization of the transfer function matrix  $%
G(s)\in R^{m\times m}$ for the system in
(\ref{eq:xdot})-(\ref{eq:y}). Then, $G(s)$ is NI if and only if
there exist matrices $P=P^{T}\geq 0$,
 $W\in \mathbb{R}^{m \times m}$, and $L\in \mathbb{R}^{m \times n}$ such that the following LMI is
satisfied:
\begin{align}\label{LMI:PR}
\begin{bmatrix}
PA+A^{T}P & PB-A^{T}C^{T} \\
B^{T}P-CA & -(CB+B^{T}C^{T})%
\end{bmatrix}%
&=%
\begin{bmatrix}
-L^{T}L & -L^{T}W \\ \nonumber
-W^{T}L & -W^{T}W%
\end{bmatrix}\\
&\leq 0.
\end{align}

\end{lemma}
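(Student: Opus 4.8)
The plan is to deduce the lemma from Lemma~\ref{PR-NI-lemma} together with the (generalized) positive real lemma. Since minimality of a state-space realization does not involve the feedthrough term, $(A,B,C)$ is also a minimal realization of $\tilde G(s)=G(s)-D=C(sI-A)^{-1}B$. Using the identity $s(sI-A)^{-1}=I+A(sI-A)^{-1}$, I would rewrite $F(s)=s\tilde G(s)$ as
$$F(s)=CB+CA(sI-A)^{-1}B,$$
so that $\begin{bmatrix}\begin{array}{c|c} A & B\\ \hline CA & CB\end{array}\end{bmatrix}$ is a realization of $F(s)$. By Lemma~\ref{PR-NI-lemma}, $G(s)$ is NI if and only if $F(s)$ is PR, so it remains to characterize positive realness of this realization of $F(s)$ by an LMI in $P$, $L$, $W$.

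Next I would apply the positive real lemma to the realization $(A,B,CA,CB)$ of $F(s)$, which states that $F(s)$ is PR if and only if there exist $P=P^{T}\ge 0$, $L$, and $W$ with
$$PA+A^{T}P=-L^{T}L,\qquad PB=A^{T}C^{T}-L^{T}W,\qquad W^{T}W=CB+B^{T}C^{T}.$$
Assembling these three identities into one $2\times 2$ block equation, transposing the middle relation to produce the $(2,1)$ block $B^{T}P-CA$, and negating the last relation gives exactly the matrix identity in (\ref{LMI:PR}); moreover its right-hand side equals $-\begin{bmatrix} L^{T}\\ W^{T}\end{bmatrix}\begin{bmatrix} L & W\end{bmatrix}\le 0$. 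Both directions then follow immediately: an NI $G(s)$ produces, through Lemma~\ref{PR-NI-lemma} and the PR lemma, matrices $P,L,W$ solving (\ref{LMI:PR}); conversely, any $P,L,W$ solving (\ref{LMI:PR}) show by the same computation that $F(s)$ is PR, hence $G(s)$ is NI.

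The step I expect to be the main obstacle is that the realization $(A,B,CA,CB)$ of $F(s)$ need not be minimal. The pair $(A,B)$ remains controllable, but the observability matrix of $(CA,A)$ is that of $(C,A)$ multiplied on the right by $A$, so its kernel is exactly $\ker A$; thus observability is lost precisely when $A$ is singular, that is, exactly when $G(s)$ has a pole at the origin, and the unobservable modes all sit at $s=0$. This is the pole-zero cancellation at the origin noted before the lemma, so the classical (minimal) positive real lemma cannot be applied directly; instead one must use the generalized positive real lemma, which requires only $P=P^{T}\ge 0$ and relaxes the minimality assumption (cf.\ the Remark above and \cite{lanson2011,scherer1994}). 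The care needed is to check that the uncontrollable/unobservable modes so introduced are confined to $s=0$ so that the generalized statement applies, and that the solution $P$ it provides is in the original state coordinates, so that (\ref{LMI:PR}) is stated directly in terms of $(A,B,C,D)$ without any further coordinate change. When $A$ is nonsingular the realization is already minimal and the standard PR lemma applies verbatim.
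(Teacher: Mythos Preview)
Your proposal is correct and follows essentially the same route as the paper: reduce to positive realness of $F(s)=s\tilde G(s)$ via Lemma~\ref{PR-NI-lemma}, note that $(A,B,CA,CB)$ is a realization of $F(s)$, and then invoke the generalized (non-minimal) positive real lemma of \cite{scherer1994} to obtain the LMI (\ref{LMI:PR}). Your additional discussion of why minimality fails precisely when $A$ is singular (the pole at the origin) and why the unobservable modes sit at $s=0$ is a useful elaboration of the Remark in the paper, but does not change the argument.
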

\begin{proof}
 Suppose that $G(s)$ is
NI, which implies from Lemma \ref{PR-NI-lemma} that $F(s)=s
\tilde{G}(s)$ with state space realization $
\begin{bmatrix}
\begin{array}{c|c}
A & B \\ \hline CA & CB
\end{array}
\end{bmatrix}$ is PR. It follows from Corollary 2 and Corollary 3 in
\cite{scherer1994} that there exists a matrix $P=P^{T}\geq 0$, such
that the  LMI in (\ref{LMI:PR}) is satisfied.

On the other hand, suppose that LMI in (\ref{LMI:PR}) is satisfied,
then $F(s)$ is PR via Corollary 1  and Corollary 3  in
\cite{scherer1994}, which implies from Lemma \ref{PR-NI-lemma} that
$G(s)$ is NI.
\end{proof}

In studying the internal stability  of an interconnection of NI and
SNI systems, we shall use the
following SNI lemma:
\begin{lemma}\cite{lanzon2008,xiong2009,xiong21010jor}\label{SNI-lemma}
Suppose that the  proper transfer function matrix $G(s)=C(sI-
A)^{-1}B+D$  with a  minimal realization $
\begin{bmatrix}
\begin{array}{c|c}
A & B \\ \hline C & D
\end{array}
\end{bmatrix}$ is SNI, then
the following conditions are satisfied:
\begin{enumerate}
\item $\det(A)\neq0$, $D=D^{T}$.
\item There exists a square  matrix $P=P^{T}>0$,
 $W\in \mathbb{R}^{m \times m}$ and $L\in \mathbb{R}^{m \times n}$
such that the following LMI is satisfied:
\end{enumerate}
\begin{equation}\label{LMI:SNI-PR}
\begin{bmatrix}
PA+A^{T}P & PB-A^{T}C^{T} \\
B^{T}P-CA & -(CB+B^{T}C^{T})%
\end{bmatrix}%
=%
\begin{bmatrix}
-L^{T}L & -L^{T}W \\
-W^{T}L & -W^{T}W%
\end{bmatrix}.%
\end{equation}
\end{lemma}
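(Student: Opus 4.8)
The plan is to establish the two conclusions in turn, the second building on the first. For conclusion~1, I would deduce $\det(A)\neq 0$ directly from minimality: since $
\begin{bmatrix}
\begin{array}{c|c}
A & B \\ \hline C & D
\end{array}
\end{bmatrix}$ is minimal, the eigenvalues of $A$ are exactly the poles of $G(s)$, and the SNI requirement that $G(s)$ have no pole in $\Re[s]\geq 0$ excludes $s=0$, so $0$ is not an eigenvalue of $A$. For $D=D^{T}$, I would let $\omega\to\infty$ in the defining inequality $j\bigl(G(j\omega)-G(j\omega)^{\ast}\bigr)>0$: since $G$ is proper and real rational, $G(j\omega)\to D$ and $G(j\omega)^{\ast}\to D^{T}$, so in the limit $j(D-D^{T})\geq 0$. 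But $D-D^{T}$ is real and skew-symmetric, hence normal with spectrum on the imaginary axis and symmetric about the origin, so the Hermitian matrix $j(D-D^{T})$ has spectrum symmetric about $0$; being positive semidefinite, it must vanish, i.e.\ $D=D^{T}$.

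For conclusion~2, I would argue along the lines of the proof of Lemma~\ref{NI-lemma}. First, $G(s)$ SNI implies $G(s)$ NI: the imaginary-part inequality merely weakens from strict to non-strict, it extends to $\omega=0$ by continuity (since $s=0$ is not a pole), and there are no imaginary-axis poles left to check. Hence, by Lemma~\ref{PR-NI-lemma}, $F(s)=s\tilde G(s)=sC(sI-A)^{-1}B$ is PR with state space realization $
\begin{bmatrix}
\begin{array}{c|c}
A & B \\ \hline CA & CB
\end{array}
\end{bmatrix}$, and the positive real lemma (Corollary~2 and Corollary~3 of \cite{scherer1994}, as used for Lemma~\ref{NI-lemma}) supplies $P=P^{T}\geq 0$, $W\in\mathbb{R}^{m\times m}$ and $L\in\mathbb{R}^{m\times n}$ satisfying the block factorization in (\ref{LMI:SNI-PR}). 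It then remains to upgrade $P\geq 0$ to $P>0$, which is where conclusion~1 is used: from the $(1,1)$ block $PA+A^{T}P=-L^{T}L$ one checks that every $x\in\ker P$ satisfies $Lx=0$ and hence $PAx=0$, so $\ker P$ is $A$-invariant; from the $(1,2)$ block $PB-A^{T}C^{T}=-L^{T}W$ one gets $CAx=0$ for every $x\in\ker P$. Therefore $\ker P$ is annihilated by the observability matrix of $(CA,A)$, which equals the observability matrix of $(C,A)$ right-multiplied by the invertible matrix $A$ and hence has full column rank. Thus $\ker P=\{0\}$, i.e.\ $P>0$, and reading off the three block equations gives exactly (\ref{LMI:SNI-PR}).

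I expect the decisive step to be this passage from $P\geq 0$ to $P>0$: it is precisely here that the hypothesis $\det(A)\neq 0$ is indispensable, because when $A$ is singular the pole-zero cancellation at the origin in $s\tilde G(s)$ removes the integrator and destroys observability of $(CA,A)$, leaving only the non-strict positive real lemma and the weaker conclusion $P\geq 0$ of the general NI Lemma~\ref{NI-lemma}. The remaining ingredients --- $\det(A)\neq 0$ from minimality, and $D=D^{T}$ from the frequency limit --- are routine, and the block identities are the same bookkeeping already carried out for Lemma~\ref{NI-lemma}.
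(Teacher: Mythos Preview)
The paper does not give its own proof of this lemma; it is quoted from \cite{lanzon2008,xiong2009,xiong21010jor} and used as a tool in the proof of Theorem~\ref{min:result}. Your proposal is a correct self-contained argument: the deduction of $\det(A)\neq 0$ from minimality and the SNI pole condition is immediate; the limit argument for $D=D^{T}$ is sound; and the reduction to Lemma~\ref{PR-NI-lemma} and the PR lemma is exactly the mechanism the paper uses for Lemma~\ref{NI-lemma}. The kernel-invariance step you use to upgrade $P\geq 0$ to $P>0$ --- showing $\ker P$ is $A$-invariant and contained in $\ker(CA)$, then invoking observability of $(CA,A)$ via the factorization of its observability matrix as that of $(C,A)$ times the invertible $A$ --- is precisely the kind of argument the paper itself deploys in the proof of Theorem~\ref{min:result} when establishing the claim $P_{1}-C_{1}^{T}G_{2}(0)C_{1}>0$, so your approach is entirely in keeping with the paper's methods even though the paper defers the SNI lemma to the cited references.
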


Also, consider  the following lemma, which will be used to derive
the main results of this paper in Section \ref{sec:main-results},
\begin{lemma}\cite{lanzon2008} \label{Mtr:Lem}
Given $A\in \mathbb{C}^{n \times n}$ with $j(A-A^{\ast })\geq 0$ and
$B\in \mathbb{C}^{n \times n}$ with $j(B-B^{\ast })>0,$ then $\det
(I-AB)\neq 0$.
\end{lemma}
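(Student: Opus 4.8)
The plan is to prove Lemma~\ref{Mtr:Lem} by contradiction: suppose $\det(I-AB)=0$, so there is a nonzero vector $x\in\mathbb{C}^n$ with $ABx = x$. Setting $y = Bx$, I would first argue that $y\neq 0$ (otherwise $x = ABx = Ay = 0$, contradicting $x\neq 0$), so that $Ay = x$ and $By = y$ after relabeling --- more precisely, we have $y = Bx$ and $Ay = x$. The strategy is then to examine the scalar quantity $x^{\ast} y = x^{\ast} B x$ and play the two sign conditions against each other.

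Concretely, first consider $q := x^{\ast} B x$. Since $j(B - B^{\ast}) > 0$ and $x\neq 0$, we have $x^{\ast}\, j(B-B^{\ast})\, x > 0$, i.e. $j(q - \bar q) > 0$, which says $\Im(q) > 0$; in particular $q\neq 0$ and $y = Bx\neq 0$. Next, using $x = Ay$, rewrite $q = x^{\ast} y = (Ay)^{\ast} y = y^{\ast} A^{\ast} y = \overline{y^{\ast} A y}$. Now apply the other hypothesis: $j(A - A^{\ast})\geq 0$ gives $y^{\ast}\, j(A - A^{\ast})\, y \geq 0$, i.e. writing $p := y^{\ast} A y$, we get $j(p - \bar p)\geq 0$, so $\Im(p)\geq 0$. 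But $q = \bar p$, so $\Im(q) = -\Im(p) \leq 0$, which contradicts $\Im(q) > 0$ established above. Hence $\det(I - AB)\neq 0$.

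The one routine point to handle carefully is the reduction: from $ABx = x$ with $x\neq 0$ we must be sure $Bx \neq 0$ before naming $y=Bx$ and treating $x = Ay$; this follows immediately as noted, but should be stated. I do not anticipate a genuine obstacle here --- the whole content is the observation that the imaginary part of $x^{\ast}Bx$ is forced strictly positive by the strict condition on $B$, while re-expressing the same number via $A$ and the eigenvector relation forces its imaginary part to be nonpositive by the (non-strict) condition on $A$. The only thing to be slightly careful about is bookkeeping with the factor of $j$ and complex conjugation, namely that for a Hermitian-part inequality $j(M-M^{\ast})\geq 0$ the associated scalar statement is $\Im(x^{\ast}Mx)\geq 0$ rather than $\leq 0$; getting that sign convention consistent with both hypotheses is what makes the contradiction close.
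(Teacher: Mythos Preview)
The paper does not supply its own proof of this lemma; it simply quotes the result from \cite{lanzon2008}. So there is nothing to compare against, and your contradiction argument via $q = x^{\ast}Bx$ and the relation $q = \overline{y^{\ast}Ay}$ is a perfectly good self-contained proof. The structure is exactly right: the strict condition on $B$ pins the imaginary part of $q$ to one strict sign, the non-strict condition on $A$ together with $x = Ay$ forces the opposite (non-strict) sign, and the two collide.

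One correction, though: you state explicitly that $j(M-M^{\ast})\geq 0$ translates to $\Im(x^{\ast}Mx)\geq 0$, and this is the wrong way round. Writing $q = x^{\ast}Mx = a + jb$, one has
\[
x^{\ast}\,j(M-M^{\ast})\,x \;=\; j(q-\bar q)\;=\; j(2jb)\;=\;-2b\;=\;-2\,\Im(q),
\]
so $j(M-M^{\ast})\geq 0$ is equivalent to $\Im(x^{\ast}Mx)\leq 0$. Consequently the $B$-hypothesis actually gives $\Im(q)<0$ (not $>0$), and the $A$-hypothesis gives $\Im(p)\leq 0$, hence $\Im(q)=-\Im(p)\geq 0$. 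The contradiction still closes, because you made the same sign slip on both sides; but since you single out this very bookkeeping point as the one thing to get right, you should fix it in the write-up.
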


\section{Main results}
\label{sec:main-results} The key result of this paper is a
generalization of the result in \cite{mabrok2011}, which gives
stability conditions for an interconnection between an NI system
(which may contain  a simple pole at the origin) and an SNI system.
The generalization is stated in Theorem \ref{min:result}. Now,
suppose the transfer function matrix $G_{1}(s)$ with a  minimal
realization $
\begin{bmatrix}
\begin{array}{c|c}
A_{1} & B_{1} \\ \hline C_{1} & D_{1}
\end{array}
\end{bmatrix}$
is  NI, and $ G_{2}(s)$ with a  minimal realization $
\begin{bmatrix}
\begin{array}{c|c}
A_{2} & B_{2} \\ \hline C_{2} & D_{2}
\end{array}
\end{bmatrix}$ is SNI. According to Lemma
\ref{NI-lemma} and Lemma \ref{SNI-lemma}, we have,
\begin{align}
P_{1}A_{1}+A_{1}^{T}P_{1} = -L_{1}^{T}L_{1}, \quad &P_{2}A_{2}+A_{2}^{T}P_{2} =-L_{2}^{T}L_{2},  \nonumber \\
\;P_{1}B_{1}-A_{1}^{T}C_{1}^{T} = -L_{1}^{T}W_{1}, \quad &P_{2}B_{2}-A_{2}^{T}C_{2}^{T}=-L_{2}^{T}W_{2}, \nonumber \\
\label{eq:l2-l3} C_{1}B_{1}+B_{1}^{T}C_{1}^{T} = W_{1}^{T}W_{1},
\quad &C_{2}B_{2}+B_{2}^{T}C_{2}^{T} =W_{2}^{T}W_{2},
\end{align}where $P_{1}\geq0$ and $P_{2}>0$.
 The
internal stability  of the closed-loop  positive-feedback
interconnection of $G_{1}(s)$ and $G_{2}(s)$ can be guaranteed  by
considering the stability of the transfer function matrix,
\begin{equation*}
(I-G_{1}(s)G_{2}(s))^{-1}=\breve{D}+\breve{C}(s
I-\breve{A})^{-1}\breve{B},
\end{equation*}%
where,
\begin{align}
\breve{A} &=
\begin{bmatrix}
A_{1} & B_{1}C_{2} \\
0 & A_{2}%
\end{bmatrix}%
+%
\begin{bmatrix}
B_{1}D_{2} \\
B_{2}%
\end{bmatrix}%
(I-D_{1}D_{2})^{-1}%
\begin{bmatrix}
C_{1} & D_{1}C_{2}%
\end{bmatrix}
\nonumber
\label{mat:A}\\
\breve{B} &=%
\begin{bmatrix}
B_{1}D_{2} \\
B_{2}%
\end{bmatrix}%
(I-D_{1}D_{2})^{-1}, \nonumber \\
\breve{C} &=(I-D_{1}D_{2})^{-1}%
\begin{bmatrix}
C_{1} & D_{1}C_{2}%
\end{bmatrix},\nonumber
\\
\breve{D} &=(I-D_{1}D_{2})^{-1}.
\end{align}%

 Now, consider the following
result, which is the main result of this paper:
\begin{theorem}\label{min:result}
Suppose that $G_{1}(s)$ is strictly proper and  NI and $G_{2}(s)$ is
SNI. Then the closed-loop  positive feedback interconnection between
$G_{1}(s)$ and $G_{2}(s)$ is internally stable if $G_{2}(0)<0$ and
the matrix $A_{1}+B_{1}G_{2}(0)C_{1}$ is not singular.
\end{theorem}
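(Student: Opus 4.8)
The plan is to show that $\breve A$ is Hurwitz by exhibiting a Lyapunov-type argument built from the LMI certificates in (\ref{eq:l2-l3}). Since $G_1(s)$ is strictly proper we have $D_1=0$, so $\breve D=I$, $\breve B=\begin{bmatrix}B_1D_2\\B_2\end{bmatrix}$, $\breve C=\begin{bmatrix}C_1&0\end{bmatrix}$, and $\breve A=\begin{bmatrix}A_1+B_1D_2C_1&B_1C_2\\B_2C_1&A_2\end{bmatrix}$. The first step is to write down a candidate quadratic storage function $V(x)=x_1^TP_1x_1+x_2^TP_2x_2$ using the $P_1\ge0$, $P_2>0$ supplied by Lemmas \ref{NI-lemma} and \ref{SNI-lemma}, and compute $\breve A^T\mathrm{diag}(P_1,P_2)+\mathrm{diag}(P_1,P_2)\breve A$. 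Substituting the identities $P_iA_i+A_i^TP_i=-L_i^TL_i$, $P_iB_i=A_i^TC_i^T-L_i^TW_i$, and $C_iB_i+B_i^TC_i^T=W_i^TW_i$ should collapse this into a manifestly negative-semidefinite expression, i.e. minus a sum of squares of the form $(L_1x_1+W_1(\cdot))^T(\cdot)+(L_2x_2+W_2(\cdot))^T(\cdot)$ plus cross terms that cancel because $G_2$ being SNI forces $D_2=D_2^T$ and $G_2(0)=D_2-C_2A_2^{-1}B_2$.

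The second step handles the fact that $P_1$ is only positive \emph{semi}definite and $-L_i^TL_i$ is only negative \emph{semi}definite, so the Lyapunov inequality alone does not give asymptotic stability; it only gives stability plus an invariant "zero-dissipation" subspace. On that subspace one has $L_1x_1\equiv0$, $L_2x_2\equiv0$, and the trajectory is governed by the unobservable-to-the-dissipation dynamics. Here I would invoke a LaSalle / detectability argument: using minimality of the realizations of $G_1$ and $G_2$ together with the PR-factorization structure (the fact that $F_i(s)=s\tilde G_i(s)$ is PR with the Lur'e data $(L_i,W_i)$), show that any such trajectory must be identically zero unless it lives in the kernel of $A_1+B_1G_2(0)C_1$ — and the hypothesis that this matrix is nonsingular rules that out. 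The condition $G_2(0)<0$ enters to guarantee that the relevant boundary terms (the "DC" part of the dissipation, coming from the pole of $G_1$ at the origin) are strictly signed, analogous to how $\lambda_{\max}(M(0)N(0))<1$ is used in the pole-free case; concretely, $G_2(0)<0$ should let me apply Lemma \ref{Mtr:Lem} to conclude $\det(I-G_1(j\omega)G_2(j\omega))\ne0$ for all $\omega$, including the limiting behaviour at $\omega=0$ where $G_1$ blows up along the direction of its residue.

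The main obstacle I anticipate is precisely the behaviour at $s=0$: because $G_1$ has a pole at the origin, the usual Nyquist-type argument "$\det(I-G_1G_2)\ne0$ on the imaginary axis $\Rightarrow$ closed loop stable" needs the indentation of the Nyquist contour around the origin to be controlled, and one must show the winding-number contribution from the small semicircle is zero. This is where $G_2(0)<0$ and the nonsingularity of $A_1+B_1G_2(0)C_1$ do the real work — the latter is essentially the statement that the closed-loop $A$-matrix has no eigenvalue at $0$, killing the one mode the Lyapunov argument cannot see. So the proof structure I would follow is: (i) reduce to $D_1=0$ and write $\breve A$ explicitly; (ii) LMI algebra to get $\breve A^T P+P\breve A\le0$ with $P=\mathrm{diag}(P_1,P_2)\ge0$; (iii) characterize the kernel of the resulting sum-of-squares and show, via minimality and the two hypotheses, that no nonzero eigenvalue of $\breve A$ lies on $j\mathbb R$ and none lies at the origin; (iv) conclude $\breve A$ is Hurwitz, hence internal stability. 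I expect steps (ii) and (iii) to be the technical heart, with (iii)'s handling of the origin mode being the crux.
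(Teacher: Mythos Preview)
Your overall architecture---Lyapunov inequality to confine the spectrum of $\breve A$ to the closed left half-plane, then separately exclude $j\omega$ for $\omega>0$ via Lemma~\ref{Mtr:Lem} and exclude the origin via nonsingularity of $A_1+B_1G_2(0)C_1$---matches the paper. But step~(ii) as written will fail: the block-diagonal candidate $P=\mathrm{diag}(P_1,P_2)$ does \emph{not} give $P\breve A+\breve A^TP\le 0$. If you compute the $(1,2)$ block you get $P_1B_1C_2+C_1^TB_2^TP_2$; substituting $P_iB_i=A_i^TC_i^T-L_i^TW_i$ leaves uncancelled terms $A_1^TC_1^TC_2+C_1^TC_2A_2$ that have no sum-of-squares interpretation. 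The cross terms do not cancel ``because $D_2=D_2^T$''; they cancel only if you put matching off-diagonal blocks into the Lyapunov matrix itself. The paper uses
\[
T=\begin{bmatrix}P_1-C_1^TD_2C_1&-C_1^TC_2\\-C_2^TC_1&P_2\end{bmatrix},
\]
and with this choice one obtains exactly $T\breve A+\breve A^TT=-\Xi^T\Xi\le0$ for an explicit $\Xi$.

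This correction also removes the need for your LaSalle step~(iii). The condition $G_2(0)<0$ is not used for the imaginary-axis argument as you suggest; Lemma~\ref{Mtr:Lem} already handles all $\omega>0$ from the NI/SNI hypotheses alone. Rather, $G_2(0)<0$ is precisely what makes $T$ \emph{strictly} positive definite: one shows $P_1-C_1^TG_2(0)C_1>0$ by a null-space argument (any $x$ in its kernel satisfies $P_1x=0$ and $C_1x=0$, and the LMI relations force $\mathcal N(P_1-C_1^TG_2(0)C_1)$ to be $A_1$-invariant and contained in $\mathcal N(C_1)$, hence in the unobservable subspace, hence $\{0\}$ by minimality), and then a Schur complement using $G_2(0)-D_2=-C_2A_2^{-1}B_2=C_2P_2^{-1}C_2^T$ gives $T>0$. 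With $T>0$ and $T\breve A+\breve A^TT\le0$ you get the closed left half-plane directly; the origin is excluded by $\det\breve A=\det(A_2)\det(A_1+B_1G_2(0)C_1)\ne0$ (Schur complement of $\breve A$), and the open imaginary axis by Lemma~\ref{Mtr:Lem}. No Nyquist indentation or invariance-principle reasoning is needed.
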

\begin{proof}
 To prove this theorem, we  prove that the
matrix $\breve{A} $ in (\ref{mat:A}) is Hurwitz; i.e., all of its
poles lie in the left-half of the complex plane.

Let $T=%
\begin{bmatrix}
P_{1}-C_{1}^{T}D_{2}C_{1} & -C_{1}^{T}C_{2} \\
-C_{2}^{T}C_{1} & P_{2}%
\end{bmatrix}%
$  be a candidate Lyapunov matrix. Since  $G_{2}(0)<0$,
$P_{1}\geq0$, we claim that
\begin{equation}
\label{inq22}
P_{1}-C_{1}^{T}G_{2}(0)C_{1} >0.
\end{equation}
In order to prove this claim, consider
$M=P_{1}-C_{1}^{T}G_{2}(0)C_{1}\geq0$ and $\mathcal{N}(M)=\{x:Mx=0\}$, where $\mathcal{N}(\cdot)$ denotes the null space. Also, given any $x\in \mathcal{N}$ we have $P_{1}x=0$ and $C_{1}x=0$. Now, consider the equations
\begin{align}
\label{eq:l2l3-1}
    P_{1}A_{1}+A_{1}^{T}P_{1} = -L_{1}^{T}L_{1}, \\
\label{eq:l2l3-2}
    B_{1}^{T}P_{1}-C_{1}A_{1} = -W_{1}^{T}L_{1}
\end{align}
outlined in \eqref{eq:l2-l3}. Now pre-multiplying and post-multiplying \eqref{eq:l2l3-1} by $x^T$ and $x$ respectively, we get,
\begin{equation}
\label{eq:s1}
    L_1 x = 0.
\end{equation}
Also, post-multiplying \eqref{eq:l2l3-1} by $x$ results in
\begin{equation}
\label{eq:s2}
    P_1 A_1 x = 0.
\end{equation}
Subsequently, post-multiplying \eqref{eq:l2l3-2} by $x$, gives
\begin{equation}
\label{eq:s3}
    C_1 A_1 x = 0.
\end{equation}
Now, let $y = A_1 x$, which from \eqref{eq:s2} and \eqref{eq:s3} gives
\begin{equation}
    P_1 y = 0, \qquad C_1 y = 0
\end{equation}
which implies $y \in \mathcal{N}(M)$. Thus, we have established that
\begin{equation}
    A_1 \ \mathcal{N}(M) \subset \mathcal{N}(M) \
    \text{and}\
    \mathcal{N}(M) \subset \mathcal{N}(C_1)
\end{equation}
which leads to the fact that $\mathcal{N}(M)$ is a subset of the
unobservable subspace of $(A_1, C_1)$; e.g., see Chapter 18 of
\cite{rugh-bk1996}. It now follows from the minimality of $(A_1,
B_1, C_1, D_1)$ that  $\mathcal{N}(M) = \{0\}$. Hence, $M = P_1 -
C_1^T G_2(0) C_1 > 0$. This completes the proof of the claim.

Now, using this claim, we have
\begin{align*}
&P_{2}>0\text{ and} \\
&P_{1}-C_{1}^{T}(D_{2}+G_{2}(0)-D_{2})C_{1}
>0,
\\
\\
\Rightarrow&P_{2} >0\text{ and} \\
&P_{1}-C_{1}^{T}D_{2}C_{1}-C_{1}^{T}C_{2}P_{2}^{-1}C_{2}^{T}C_{1}
>0,
\\
\\
\Rightarrow&\begin{bmatrix}
P_{1}-C_{1}^{T}D_{2}C_{1} & -C_{1}^{T}C_{2} \\
-C_{2}^{T}C_{1} & P_{2}%
\end{bmatrix}>0.
\\
\end{align*}
That is, $T > 0$.

 Now, the corresponding Lyapunov inequality is given by,
\begin{align*}
T\breve{A}+\breve{A}^{T}T =& \ \begin{bmatrix}
P_{1}-C_{1}^{T}D_{2}C_{1} & -C_{1}^{T}C_{2} \\
-C_{2}^{T}C_{1} & P_{2}%
\end{bmatrix} \\
&\: \:\ \  \times \begin{bmatrix}
A_{1}+B_{1}D_{2}C_{1} & B_{1}C_{2} \\
B_{2}C_{1} & A_{2}%
\end{bmatrix}
\\
&\: \: +
\begin{bmatrix}
A_{1}+B_{1}D_{2}C_{1} & B_{1}C_{2} \\
B_{2}C_{1} & A_{2}%
\end{bmatrix}%
^{T}\\
& \: \: \ \ \times\begin{bmatrix}
P_{1}-C_{1}^{T}D_{2}C_{1} & -C_{1}^{T}C_{2} \\
-C_{2}^{T}C_{1} & P_{2}%
\end{bmatrix},
\\
=&- \begin{bmatrix}
\left( C_{1}^{T}D_{2}W_{1}^{T}+L_{1}^{T}\right)  & C_{1}^{T}W_{2}^{T} \\
C_{2}^{T}W_{1}^{T} & \left( L_{2}^{T}\right)
\end{bmatrix}\\
  &\: \: \: \times \begin{bmatrix}
\left( W_{1}D_{2}C_{1}+L_{1}\right)  & W_{1}C_{2} \\
W_{2}C_{1} & \left( L_{2}\right)
\end{bmatrix}%
\\
\leq&0.
\end{align*}

This implies  that $\breve{A}$ has all its poles in the closed left
half of the complex plane.
 We now show that  $\det (\breve{A})\neq 0$. Indeed, using the assumption
$(A_{1}+B_{1}G_{2}(0)C_{1})$, we obtain
%
\begin{align*}
\det &(\breve{A})\\ &=\det (A_{2})\det
((A_{1}+B_{1}D_{2}C_{1}-B_{1}C_{2}\left(
A_{2}\right) ^{-1}B_{2}C_{1}) \\
 &=\det (A_{2})\det (A_{1}+B_{1}G_{2}(0)C_{1}) \\
 &=\det (A_{2})\det (A_{1}+B_{1}G_{2}(0)C_{1})\\
&\neq0
\end{align*}since $(A_{1}+B_{1}G_{2}(0)C_{1})$ is non singular and $\det
(A_{2})\neq0$.
Also, using Lemma \ref{Mtr:Lem} and the fact that $G_{1}(s)$ is NI
and $G_{2}(s)$ is SNI, we conclude that
$\det(I-G_{1}(j\omega)G_{2}(j \omega))\neq 0$. This   implies that
$\breve{A}$ has no eigenvalues on the imaginary axis for $\omega
>0$. Hence, the matrix $\breve{A}$ is Hurwitz. This completes the proof of the
theorem.
\end{proof}

\section{Illustrative  Example}
\label{sec:numerical-example} To illustrate the main result of this
paper, consider the SNI transfer function
$G_{2}(s)=\frac{1}{s+3}-1$, which satisfies
$G_{2}(0)=-\frac{2}{3}<0$ and the strictly proper NI transfer
function $G_{1}(s)=\frac{1}{s(s+1)}$,  which has a pole at the
origin. Thus, the assumptions in Theorem \ref{min:result} are
satisfied and we can conclude that the closed-loop system is stable.
Also,  the poles of the closed-loop transfer function corresponding
to $G_{2}(s)$ and $G_{1}(s)$ are the roots of the polynomial
$(1-G_{1}(s)G_{2}(s))=s^{3}+4s^{2}+4s+2$ which are $\{-2.84,
-0.58\pm0.61 i\}$. This verifies that the closed-loop transfer
function is indeed  asymptotically stable.

\section{Conclusion}
\label{sec:conclusion} In this paper,  stability results for a
positive-feedback interconnection of NI systems have been derived. A
generalization of the NI lemma, allowing for  a simple pole at the
origin,  has been used in deriving these results. This work can be
used in  the  controller design  to allow for a broader class of NI
systems than considered previously. Also, the stability result for
an NI system with a pole at the origin connected with an SNI system
can be used for
  controller design including    integral action.
The validity of the main results in this paper  have been
illustrated via a numerical example.

\bibliographystyle{IEEEtran}
\bibliography{cdc2011}
\balance
\end{document}